\def \de {\partial}
\def \phi {\varphi}
\def \RN {\mathbb{R}^N}
\def \SN {S^{N-1}}
\def \R {\mathbb{R}}
\def \div {\text{div}}
\def \Gwr {\Gamma_{\omega, R}}
\def \Sw {\Sigma_\omega}
\newtheorem{theorem}{Theorem}[section]
\newtheorem{corollary}[theorem]{Corollary}
\theoremstyle{definition}
\numberwithin{equation}{section}
\begin{document}

\title{Integral formulas for hypersurfaces in cones and related questions}
\author[F. Pacella]{Filomena Pacella}
\address{Dipartimento di Matematica, Sapienza Universit\`a di Roma, P.le Aldo Moro 5 - 00185 Roma, Italy.
        }
 \email{pacella@mat.uniroma1.it}

\author[G. Tralli]{Giulio Tralli}
\address{Dipartimento di Matematica e Informatica, Universit\`a degli Studi di Ferrara, Via Machiavelli 30 - 44121 Ferrara, Italy.
         }
 \email{giulio.tralli@unife.it}



\begin{abstract}
We discuss the validity of Minkowski integral identities for hypersurfaces inside a cone, intersecting the boundary of the cone orthogonally. In doing so we correct a formula provided in \cite{CP}. Then we study rigidity results for constant mean curvature graphs proving the precise statement of a result given in \cite{PT1} and \cite{PT2}. Finally we provide an integral estimate for stable constant mean curvature hypersurfaces in cones.
\end{abstract}
\maketitle

\section{Introduction}\label{intro}

Let $\omega$ be a smooth open domain on the unit sphere $\SN$, $N\geq 2$, and let $\Sw$ be the open cone in $\RN$ with vertex at the origin $O$
$$\Sw=\{tx\,:\,x\in\omega,\,\,t\in(0,+\infty)\}.$$
We consider a smooth hypersurface $\Gamma\subset\Sw$ with $\de\Gamma\subset \de\Sw\smallsetminus\{O\}$, and denote by $\nu=\nu_x$ the (exterior) unit normal to $\Gamma$ for any $x\in\overline{\Gamma}$. Instead we denote by $n=n_x$ the exterior unit normal to the cone at any point $x\in \de\Sw\smallsetminus\{O\}$. We focus on the case when $\Gamma$ and $\de\Sw$ intersect orthogonally which means that we assume
\begin{equation}\label{ortho}
\left\langle \nu_x, n_x \right\rangle=0\qquad\forall\, x\in \de\Gamma.
\end{equation}
Let $\Gwr$ be the piece of a sphere centered at the vertex $O$ contained in the cone, i.e.
$$\Gwr=\de B_R\cap\Sw$$
where $B_R$ is the ball of radius $R>0$ centered at the origin. Then the orthogonality condition \eqref{ortho} is obviously satisfied for $\Gamma=\Gwr$.

\vskip 0.4cm

The main aim of our paper is to discuss the validity of integral identities of Hsiung-Minkowski type for hypersurfaces $\Gamma$ in the cone, satisfying the above orthogonality condition. \\
It is already known that the following first Minkowski formula holds true
\begin{equation}\label{Mink1}
\int_\Gamma{1-H\left\langle x,\nu_x\right\rangle}=0,
\end{equation}
where $H=H(x)$ is the mean curvature of $\Gamma$, see \cite[formula (4.8)]{RitRos} or \cite[Section 1]{CP}, see also \cite[page 858]{PT1} and \cite[formula 2.3]{PT2}. A short proof of \eqref{Mink1} will be provided in Section \ref{sec2}, for the reader convenience.\\
Denoting by $II^{\Sw}(\cdot,\cdot)$ the second fundamental form of $\de\Sw$ and by $II^{\Gamma}$ the one of $\Gamma$ with the associated second elementary symmetric function of its principal curvatures $\sigma_2$ (see Section \ref{sec2} for the definition) here we prove the following second Minkowski identity:

\begin{theorem}\label{thmintro}
Let $N\geq 3$. Under the standing assumptions for $\Sw$ and $\Gamma$, we have 
\begin{equation}\label{Mink2prima}
\int_\Gamma{H-\sigma_2\left\langle x,\nu\right\rangle}=\frac{-1}{(N-1)(N-2)}\int_{\de\Gamma}{II^{\Sw}\left(\nu-\left\langle x,\nu\right\rangle \frac{x}{|x|^2},\nu-\left\langle x,\nu\right\rangle \frac{x}{|x|^2}\right)  \left\langle x,\nu\right\rangle }.
\end{equation}
\end{theorem}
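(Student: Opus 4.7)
I would proceed in three stages: first, a Newton-tensor divergence identity on $\Gamma$; second, the divergence theorem to turn the interior integral into one over $\de\Gamma$; and third, a rewriting of the resulting boundary integrand using the orthogonality condition \eqref{ortho} together with the radial flatness of $\de\Sw$.

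For the interior step, let $A$ denote the shape operator of $\Gamma$ and set $T_1 := \sigma_1 I - A$ for the first Newton tensor (with $\sigma_1, \sigma_2$ in the normalization of Section \ref{sec2}). Writing $x^T := x - \la x,\nu\ra \nu$ for the tangential position vector, the elementary identity $\nabla^\Gamma_Y x^T = Y - \la x,\nu\ra A Y$ for $Y \in T\Gamma$, combined with the fact that $T_1$ is divergence free on $\Gamma$ in the flat ambient $\RN$ (a consequence of Codazzi), gives
\begin{equation*}
\div_\Gamma(T_1 x^T) = \tr(T_1) - \la x,\nu\ra\, \tr(T_1 A) = (N-2)\sigma_1 - 2\sigma_2 \la x,\nu\ra.
\end{equation*}
Integrating over $\Gamma$ and applying the divergence theorem produces a boundary integral $\int_{\de\Gamma} \la T_1 x^T, \eta\ra$, where $\eta$ is the outward conormal to $\de\Gamma$ inside $\Gamma$. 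The orthogonality \eqref{ortho} forces $n \in T\Gamma$ along $\de\Gamma$, and since $n$ is perpendicular to $T(\de\Gamma)$ inside $\de\Sw$, the conormal $\eta$ must coincide with $n$. Moreover $x$ is tangent to $\de\Sw$, so $\la x, n\ra = 0$ on $\de\Gamma$ and therefore $\la x^T, n\ra = 0$; this collapses the boundary integrand to $\la T_1 x^T, n\ra = -\la A x^T, n\ra$.

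The decisive step is rewriting $\la A x^T, n\ra$ as a second fundamental form of $\de\Sw$. Differentiating the identity $\la \nu, n\ra = 0$ along the direction $x^T \in T(\de\Gamma)$ converts the shape operator of $\Gamma$ into that of $\de\Sw$: $\la \nabla_{x^T} \nu, n\ra = II^{\Sw}(x^T, \nu)$. Substituting $x^T = x - \la x,\nu\ra \nu$ and using the radial flatness of the cone, $II^{\Sw}(x, \cdot) = 0$, one obtains
\begin{equation*}
II^{\Sw}(x^T, \nu) = -\la x,\nu\ra\, II^{\Sw}(\nu, \nu) = -\la x,\nu\ra\, II^{\Sw}\!\left(\nu - \la x,\nu\ra\tfrac{x}{|x|^2},\; \nu - \la x,\nu\ra\tfrac{x}{|x|^2}\right),
\end{equation*}
the last equality being a further application of $II^{\Sw}(x, \cdot) = 0$. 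Dividing the integrated identity by $(N-1)(N-2)$ and matching normalizations delivers \eqref{Mink2prima}.

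The main obstacle I anticipate is the boundary analysis: correctly identifying the outward conormal of $\de\Gamma$ inside $\Gamma$ with the cone's normal $n$, carefully tracking the sign conventions when passing from $\la A x^T, n\ra$ to $II^{\Sw}(x^T, \nu)$, and recognizing that it is precisely the radial flatness of $\de\Sw$ that allows the boundary integrand to be put in the invariant form displayed in \eqref{Mink2prima}. The interior divergence identity is standard Hsiung--Minkowski material and should be routine.
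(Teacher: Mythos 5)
Your proposal is correct and takes essentially the same route as the paper's first proof: your Newton-tensor field $T_1x^T=\sigma_1x^T-Ax^T$ is precisely the field $F_2=(N-1)HF_1-II^{\Gamma}(F_1,\cdot)$ used there, and your boundary analysis (identification of the outward conormal with $n$, conversion of $\left\langle \nabla_{x^T}\nu,n\right\rangle$ into a second fundamental form of the cone by differentiating $\left\langle \nu,n\right\rangle=0$, and the radial flatness $II^{\Sw}(x,\cdot)=0$) matches the paper step by step. The only point to watch is the sign convention: with the paper's definition $II^{\Sw}(u,v)=\left\langle \nabla_{u}n,v\right\rangle$ one has $\left\langle \nabla_{x^T}\nu,n\right\rangle=-II^{\Sw}(x^T,\nu)$, which is what produces the overall minus sign on the right-hand side of \eqref{Mink2prima}.
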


A second Minkowski formula in cones under the orthogonality condition is claimed in \cite[formula (7) in Proposition 1, case $k=2$]{CP} but it is wrong since the integral in the right-hand side of \eqref{Mink2prima} is missing. Actually this integral term is not zero in general when $N\geq 3$, not even in the case when the cone is convex (in which case is non-positive). In \cite{CP} also higher order Minkowski formulas are claimed and we believe that they all would need some additional term and additional arguments.\\
In Section \ref{sec2} we provide two proofs of Theorem \ref{thmintro}, the second one follows the approach of \cite{CP} with all details there missing.

\vskip 0.4cm

The wrong statement in \cite[Proposition 1]{CP} induced in turn a wrong rigidity theorem in our previous paper \cite[Theorem 1.1]{PT2}, see also \cite[Theorem 6.4]{PT1}, about constant mean curvature surfaces (CMC in short) $\Gamma$ which are starshaped with respect to the vertex of the (possibly non-convex) cone and intersect the cone in an orthogonal way.\\
Thus, in Section \ref{sec3} we prove the correct rigidity statement in Theorem \ref{prth1} which needs an integral gluing condition to hold.\\
Some connections between Theorem \ref{prth1} and isoperimetric inequality for almost convex cones proved by Baer-Figalli in \cite{BF} will be commented.

\vskip 0.4cm

Indeed the CMC surfaces intersecting the cone orthogonally are the critical points of the relative perimeter functional, under a volume constraint, while the isoperimetric inequality aims to characterize the minimizers of the perimeters. In \cite{LP} it has been proved that, in convex cones, the only sets in $\Sw$ which minimize the relative perimeter are the spherical sectors. A similar characterization was obtained in \cite{RitRos} for the stable critical points of the relative perimeter, in convex cones.\\
In Section \ref{sec4} we prove an inequality, Theorem \ref{lambdino}, for stable critical points which allows to relax the convexity assumption of \cite{RitRos}. It is interesting that it depends on the first Neumann eigenvalue of the Laplace-Beltrami operator on $\Gamma$. Recently, in \cite{IPW}, a connection between the stability of the spherical sector, as a critical point of the relative perimeter, and the first Neumann eigenvalue of the Laplace-Beltrami operator on $\omega\subset \SN$ was established. It would be interesting to further intestigate the role of these Neumann eigenvalues in the study of CMC surfaces in cones.

\subsection{Notations}

Let us further clarify here the assumptions and the notations used throughout the paper. The domain $\omega$ on the sphere $\SN$ is assumed to have a $C^2$-smooth boundary. Also, the embedded $(N-1)$-dimensional manifold $\Gamma$ is assumed to be relatively open, $C^3$-smooth in its relative interior, and $C^2$-smooth up to its (non-empty) relative boundary $\de\Gamma$ contained in $\de\Sw\smallsetminus\{O\}$; $\Gamma$ will be always bounded, connected, and orientable. Under these assumptions, there is a bounded open set $\Omega\subset\Sw$ (called \emph{sector-like domain} in \cite{PT1,PT2}) such that $\Gamma=\de\Omega\cap \Sw$: the choice of the unit normal $\nu$ is fixed as the exterior unit normal with respect to $\Omega$.

Having to deal with the two hypersurfaces $\Gamma$ and $\de\Sw$, we shall introduce the relevant geometric objects for our analysis. We denote by $\nabla$ the standard Levi-Civita connection in $\R^N$. Locally around any point in $\Gamma$, we can consider an orthonormal frame $\{e_1,\ldots,e_{N-1}\}$, and define (with a slight abuse of notations) by $\nabla^\Gamma$ the gradient with respect to the tangent directions to $\Gamma$ that is $\nabla^\Gamma u =\sum_{j=1}^{N-1} e_j(u) e_j$. Also, for $x\in \Gamma$, the second fundamental form $II_x=II^\Gamma_x$ of $\Gamma$ is the bilinear symmetric form on $T_x\Gamma\times T_x\Gamma$ which can be defined as
$$II^\Gamma(e_i,e_j)=\left\langle \nabla_{e_i}\nu,e_j\right\rangle,\qquad \mbox{for }i,j\in \{1,\ldots,N-1\}.$$
An important role in what follows is played by the mean curvature $H$ of $\Gamma$, and (in case $N\geq 3$) by the second elementary symmetric function $\sigma_2$ related to $II^\Gamma$: we recall here the definitions, namely
$$
H=\frac{1}{N-1}\sum_{j=1}^{N-1}II^\Gamma(e_j,e_j),\quad\mbox{ and }
$$
$$
\sigma_2=\frac{1}{(N-1)(N-2)}\left( (N-1)^2 H^2 - \sum_{i,j=1}^{N-1} (II^\Gamma(e_i,e_j))^2 \right).
$$
The analogous definitions, starting from the tangential derivatives of the normal direction to the cone $n$, are adopted for the second fundamental form $II^{\Sw}$ of the cone $\de\Sw$. The reader has to keep in mind that the radial direction $x$ (we decided to adopt the same notation for the position vector and the radial vector field with respect to $O$) is always a tangent direction for the cone, i.e.
\begin{equation}\label{conedef}
\left\langle x,n_x\right\rangle=0\qquad\mbox{ for any }x\in\de\Sw\smallsetminus\{O\}.
\end{equation}
Furthermore, the radial direction is also a direction of flatness in the sense that
\begin{equation}\label{flatx}
II_x^{\Sw}(x,\cdot)\equiv 0.
\end{equation}
One can easily check the previous identity by considering any tangential direction $e\in T_x\de\Sw$ (for arbitrary $x\in \de\Sw\smallsetminus\{O\}$) and computing $II_x^{\Sw}(x,e)=-\left\langle n_x,\nabla_e x \right\rangle=-\left\langle n,e\right\rangle=0$. We say that the cone is convex if
$$
II_x^{\Sw}\geq 0 \qquad\mbox{ for any }x\in\de\Sw\smallsetminus\{O\}
$$
in the sense of bilinear forms. Finally, we fix the notations for the first (non-trivial) eigenvalue for the Laplace-Beltrami operator on $\Gamma$ with homogeneous Neumann boundary conditions
$$
\lambda_1(\Gamma):=\inf\left\{\frac{\int_\Gamma |\nabla^\Gamma u|^2}{\int_\Gamma u^2}\,:\, u\in H^1(\Gamma)\,,\, u\neq 0\mbox{ such that }\int_\Gamma u=0\right\}.
$$
Without further explanations, we mention here that in all the previous integral formulas and also in what follows the relevant measures are always the induced surface measures of maximal (respectively $N-1$ and $N-2$) dimensions over $\Gamma$ and $\de\Gamma$.

\section{Minkowski integral identities}\label{sec2}

In this section we discuss the validity of integral identities of Hsiung-Minkowski type in the present setting ($\Gamma$ as above intersecting the cone $\Sw$ in an orthogonal way, thanks to \eqref{ortho}). As anticipated in the Introduction, the first formula of this kind reads as
\begin{equation}\label{Mink1dopo}
\int_\Gamma{1-H\left\langle x,\nu\right\rangle}=0.
\end{equation}
A proof of \eqref{Mink1dopo} can be quickly obtained and we reproduce it here for the convenience of the reader. Let us denote by $\div_\Gamma(F)$ the divergence of vector field which is tangent to $\Gamma$ and it is $C^1$-smooth up to $\de\Gamma$ (in our notations with the fixed orthonormal frame, this means $\div_\Gamma(F)=\sum_{j=1}^{N-1}\left\langle \nabla_{e_j} F,e_j\right\rangle$). If we consider the tangential part of the radial vector field
\begin{equation}\label{defF1}
F_1(x)=x-\left\langle x,\nu\right\rangle \nu,
\end{equation}
a straightforward computation shows that
$$
\div_\Gamma(F_1)=N-1-(N-1)H\left\langle x,\nu\right\rangle.
$$
Hence, by the divergence theorem we have
\begin{equation}\label{divuno}
\int_\Gamma{1-H\left\langle x,\nu\right\rangle} = \frac{1}{N-1}\int_\Gamma{\div_\Gamma(F_1)} = \int_{\de \Gamma} \left\langle x-\left\langle x,\nu\right\rangle \nu , n\right\rangle =0.
\end{equation}
We stress that, in the last equality, we used the fact that the conormal unit vector to $\de\Gamma$ (tangent to $\Gamma$) pointing outwards is equal to the exterior normal to the cone $n$ thanks to \eqref{ortho}, as well as the fact that we have the pointwise identities $\left\langle x,n\right\rangle=0=\left\langle \nu,n\right\rangle$ through $\de\Gamma$.\\
As far as a second Minkowski formula is concerned, the following holds

\begin{theorem}
Let $N\geq 3$, and assume $\Sw$ and $\Gamma$ as above. We have 
\begin{equation}\label{Mink2}
\int_\Gamma{H-\sigma_2\left\langle x,\nu\right\rangle}=\frac{-1}{(N-1)(N-2)}\int_{\de\Gamma}{II^{\Sw}\left(\nu-\left\langle x,\nu\right\rangle \frac{x}{|x|^2},\nu-\left\langle x,\nu\right\rangle \frac{x}{|x|^2}\right)  \left\langle x,\nu\right\rangle }.
\end{equation}
\end{theorem}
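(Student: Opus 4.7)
\emph{Proof plan.} Mimicking the derivation of \eqref{Mink1dopo}, I would look for a tangent vector field $F_2$ on $\Gamma$ whose tangential divergence reproduces the integrand $H-\sigma_2\langle x,\nu\rangle$ up to a constant, then apply the divergence theorem and analyze the boundary integral. The natural candidate coming from the theory of Newton transformations is $F_2:=T_1(F_1)$, with $F_1$ as in \eqref{defF1} and $T_1:=(N-1)H\cdot\mathrm{Id}-A^\Gamma$, where $A^\Gamma$ is the shape operator of $\Gamma$ (so that $II^\Gamma(X,Y)=\langle A^\Gamma X,Y\rangle$).

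The main bulk computation establishes
$$\div_\Gamma(F_2)=(N-1)(N-2)\bigl(H-\sigma_2\langle x,\nu\rangle\bigr),$$
relying on two standard facts: the Codazzi identity in the flat ambient space $\R^N$, which makes $T_1$ divergence-free as a symmetric endomorphism of $T\Gamma$; and the elementary identity $(\nabla^\Gamma_{e_j}F_1)^T=e_j-\langle x,\nu\rangle A^\Gamma(e_j)$. The traces $\tr(T_1)=(N-1)(N-2)H$ and $\tr(T_1 A^\Gamma)=(N-1)^2H^2-\sum_{i,j}(II^\Gamma(e_i,e_j))^2=(N-1)(N-2)\sigma_2$ (the latter being precisely the definition of $\sigma_2$ in the Introduction) then give the claim. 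Applying the divergence theorem and using \eqref{ortho} to identify $n$ as the outward conormal to $\de\Gamma$ in $\Gamma$,
$$(N-1)(N-2)\int_\Gamma\bigl(H-\sigma_2\langle x,\nu\rangle\bigr)=\int_{\de\Gamma}\langle F_2,n\rangle.$$
On $\de\Gamma$ one has $\langle F_1,n\rangle=0$ (by \eqref{conedef} and \eqref{ortho}), so the $(N-1)H$-piece of $T_1F_1$ contributes nothing and one is left with $\langle F_2,n\rangle=-\langle A^\Gamma(F_1),n\rangle=-II^\Gamma(F_1,n)$.

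The final step converts $II^\Gamma(F_1,n)$ into the $II^{\Sw}$ expression in \eqref{Mink2prima}. The key observation is that $F_1$ is actually tangent to $\de\Gamma$ along $\de\Gamma$: we have $F_1\in T\Gamma$ by construction, and since both $x$ (by \eqref{conedef}) and $\nu$ (by \eqref{ortho}) are tangent to $\de\Sw$ on $\de\Gamma$, one also has $F_1\in T\de\Sw$, so $F_1\in T\Gamma\cap T\de\Sw=T\de\Gamma$. Differentiating the identity $\langle\nu,n\rangle\equiv 0$ along $\de\Gamma$ in the direction $F_1$ produces
$$II^\Gamma(F_1,n)=\langle\nabla_{F_1}\nu,n\rangle=-\langle\nu,\nabla_{F_1}n\rangle=-II^{\Sw}(F_1,\nu),$$
where the last equality uses that $F_1$ and $\nu$ are both tangent to $\de\Sw$ along $\de\Gamma$. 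Invoking \eqref{flatx} twice — first on $F_1=x-\langle x,\nu\rangle\nu$ to obtain $II^{\Sw}(F_1,\nu)=-\langle x,\nu\rangle II^{\Sw}(\nu,\nu)$, then on the decomposition $\nu=w+\langle x,\nu\rangle x/|x|^2$ (with $w$ as in \eqref{Mink2prima}) to obtain $II^{\Sw}(\nu,\nu)=II^{\Sw}(w,w)$ — and substituting back yields \eqref{Mink2prima}.

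The main obstacle I anticipate is organizing the divergence computation cleanly, because it hinges on the Codazzi-based divergence-free property of the Newton tensor $T_1$; by contrast, once one spots that $F_1$ lies in $T\de\Gamma$, the passage from $II^\Gamma(F_1,n)$ to $\langle x,\nu\rangle II^{\Sw}(w,w)$ is just a few lines using \eqref{ortho}, \eqref{conedef}, and \eqref{flatx}.
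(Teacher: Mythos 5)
Your proposal is correct and follows essentially the same route as the paper's first proof: the field $F_2=(N-1)H\,F_1-II^\Gamma(F_1,\cdot)$ used there is exactly your Newton-tensor candidate $T_1(F_1)$, the divergence identity \eqref{divdue} is your bulk computation, and the boundary term is converted to $-\langle x,\nu\rangle II^{\Sw}(\nu,\nu)=-\langle x,\nu\rangle II^{\Sw}(w,w)$ via the same differentiation of $\langle\nu,n\rangle=0$ along $F_1\in T\de\Gamma$ and the flatness \eqref{flatx}. No gaps; the paper also records an alternative second proof via normal variations $\Gamma_t=\{x+t\nu_x\}$, which you did not need.
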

\begin{proof}
We provide two proofs of \eqref{Mink2}. For the first one we essentially collect known arguments used in \cite[formula (4.10)]{RitRos} or in \cite[Section 5 and formula (6.5)]{PT1}, and from these we derive \eqref{Mink2}. For the second proof we adopt the argument which was used in \cite[Proposition 1, case $k=2$]{CP} and led to the erroneous statement in \cite[formula (7), case $k=2$]{CP}.\\
\noindent{\it First proof. } Let $F_1$ be the vector field defined in \eqref{defF1}, and let $F_2$ be the one defined by
$$
F_2(x)=(N-1)H(x) F_1(x)-II_x^\Gamma(F_1,\cdot).
$$
This means that, if $\{e_1,\ldots,e_{N-1}\}$ describes locally our orthonormal frame, then
$$
F_2=\sum_{j,l=1}^{N-1}\left( II^\Gamma(e_l,e_l)\left\langle x,e_j\right\rangle - II^\Gamma(e_l,e_j)\left\langle x,e_l\right\rangle \right)e_j.
$$
A straightforward (yet known) computation shows that
\begin{equation}\label{divdue}
\div_\Gamma(F_2)= (N-1)(N-2) \left( H-\sigma_2 \left\langle x,\nu\right\rangle \right).
\end{equation}
Moreover, for any $x\in\de\Gamma$ we have
\begin{align*}
\left\langle F_2(x), n_x\right\rangle &= (N-1)H(x) \left\langle F_1(x), n_x\right\rangle - \left\langle \nabla_{F_1} \nu, n \right\rangle\\
&= (N-1)H(x) \left\langle x, n_x\right\rangle + \left\langle \nu,  \nabla_{F_1}  n \right\rangle  \\
&= \left\langle \nu,  \nabla_{F_1}  n \right\rangle,
\end{align*}
where in the second equality we used twice that the orthogonality condition \eqref{ortho} and in the third equality we used \eqref{conedef}. Keeping in mind the definition of second fundamental form for the cone $\de\Sw$ and the fact that $F_1$ is also tangent to the cone at the points in $\de\Gamma$ thanks to \eqref{conedef}-\eqref{ortho}, for any $x\in\de\Gamma$ we thus obtain
\begin{align}\label{duevoltebd}
\left\langle F_2(x), n_x\right\rangle&=II^{\Sw}(F_1,\nu)=II^{\Sw}(x-\left\langle x,\nu\right\rangle \nu,\nu)\\
&=II^{\Sw}(x,\nu)-\left\langle x,\nu\right\rangle II^{\Sw}(\nu,\nu)=-\left\langle x,\nu\right\rangle II^{\Sw}(\nu,\nu),\notag
\end{align}
where in the last equality we exploited \eqref{flatx}. Combining \eqref{duevoltebd} with \eqref{divdue} and keeping in mind that $n$ is also the conormal unit vector to $\de\Gamma$ pointing outwards, the divergence theorem yields
$$
(N-1)(N-2)\int_\Gamma{H-\sigma_2\left\langle x,\nu\right\rangle}=\int_\Gamma{ \div_\Gamma(F_2) }=\int_{\de\Gamma}{\left\langle F_2, n\right\rangle}= -\int_{\de\Gamma}{II^{\Sw}\left(\nu,\nu \right)  \left\langle x,\nu\right\rangle }.
$$
The previous identity, together with \eqref{flatx}, completes the proof of \eqref{Mink2}.\\
\noindent{\it Second proof. } Let us move $\Gamma$ in the normal direction $\nu$ in the following sense: for $t>0$ we define
$$
\Gamma_t=\left\{x_t=x+t\nu_x\,:\,x\in\Gamma\right\}
$$
We keep in mind that $\Gamma_0=\Gamma$, and the fact that for $t>0$ $\Gamma_t$ is not attached to $\de\Sw$. In particular, we denote by $n^t$ the conormal unit vector to $\de\Gamma_t$ pointing outwards ($n^0=n$ which is the normal to the cone at the points in $\de\Gamma$). We also notice that the unit normal direction to $\Gamma_t$ at the point $x_t=x+t\nu_x$ is given by $\nu_x$ (i.e. $\nu^t_{x_t}=\nu_x$). We can then define
$$
F_1^t(x_t)=x_t-\left\langle x_t, \nu_{x_t}^t\right\rangle \nu^t_{x_t}.
$$
The divergence theorem yields
\begin{equation}\label{daqui}
\int_{\Gamma_t} {\text{div}}_{\Gamma_t}\left( F_1^t \right)=\int_{\de\Gamma_t} \left\langle F_1^t, n^t \right\rangle.
\end{equation}
On one hand, as for \eqref{divuno} above, we have
$$
{\text{div}}_{\Gamma_t}\left( F_1^t \right) = (N-1)-(N-1)H_t\left\langle x_t,\nu^t \right\rangle.
$$
We can now integrate over $\Gamma_t$ and exploit a well-known expansion in $t$ of the mean curvature $H_t$ and of the area element (see, e.g., \cite[Section 3]{H} or \cite[page 877]{CP}) which provides the following asymptotics for $t\to 0^+$
\begin{align}\label{latodiv}
\int_{\Gamma_t}{\text{div}}_{\Gamma_t}\left( F_1^t \right)&=(N-1)\int_{\Gamma_t}{1-H_t\left\langle x_t,\nu^t \right\rangle}\\
&=(N-1)\left( \left( \int_{\Gamma}{1-H\left\langle x,\nu \right\rangle} \right) +  (N-2)t\left( \int_{\Gamma}{H-\sigma_2\left\langle x,\nu \right\rangle} \right) + o(t) \right).\notag
\end{align}
On the other hand, since we have both \eqref{conedef}-\eqref{ortho} at $t=0$, we deduce that
\begin{align}\label{latobordo}
\int_{\de\Gamma_t} \left\langle F_1^t, n^t \right\rangle &= t \frac{d}{dt}\left( \int_{\de\Gamma_t} \left\langle F_1^t, n^t \right\rangle \right)_{|t=0} + o(t)\\
&=t \int_{\de\Gamma}{ \frac{d}{dt}\left( \left\langle F_1^t, n^t \right\rangle \right)_{|t=0} } + o(t)\notag\\
&=t \int_{\de\Gamma}{ \left( \left\langle \nu ,n\right\rangle + \left\langle x, \frac{d}{dt}\left(n^t\right)_{|t=0} \right\rangle  \right)} + o(t)\notag\\
&=t \int_{\de\Gamma}{ \left\langle x, \frac{d}{dt}\left(n^t\right)_{|t=0} \right\rangle } + o(t)\qquad\mbox{ as $t\to 0^+$.}\notag
\end{align}
Combining \eqref{daqui}-\eqref{latodiv}-\eqref{latobordo}, we realize that
\begin{align*}
&(N-1)\left( \left( \int_{\Gamma}{1-H\left\langle x,\nu \right\rangle} \right) +  (N-2)t\left( \int_{\Gamma}{H-\sigma_2\left\langle x,\nu \right\rangle} \right) + o(t) \right) =\\
&= t \int_{\de\Gamma}{ \left\langle x, \frac{d}{dt}\left(n^t\right)_{|t=0} \right\rangle } + o(t) \qquad\mbox{ as $t\to 0^+$.}
\end{align*}
This is saying in particular that \eqref{Mink1} holds true (as we know), and also that
$$
(N-1)(N-2)\int_{\Gamma}{H-\sigma_2\left\langle x,\nu \right\rangle}=\int_{\de\Gamma}{ \left\langle x, \frac{d}{dt}\left(n^t\right)_{|t=0} \right\rangle }.
$$
This (second) proof of \eqref{Mink2} will be then complete once we show that
\begin{equation}\label{claim}
\left\langle x, \frac{d}{dt}\left(n_{x_t}^t\right)_{|t=0} \right\rangle = -\left\langle x,\nu\right\rangle II^{\Sw}_x(\nu,\nu)\quad\forall\, x\in\de\Gamma.
\end{equation}
To this aim, locally around any $x\in\de\Gamma$ we consider an orthonormal frame $\{e_1,\ldots,e_{N-2}\}$ for the tangent space to $\de\Gamma$. For each $j\in\{1,\ldots,N-2\}$, we have $(e_j)_x=\frac{d}{d\tau}\left(\gamma_x(\tau)\right)_{|\tau=0}$ (where $\gamma_x(\tau)\in\de\Gamma$ and $\gamma_x(0)=x$). Following the flux in the variable $t$ which maps $x\mapsto x_t$, the curve $\gamma_x$ is brought into
$$
\gamma^t_x(\tau)=\gamma_x(\tau)+t \nu_{\gamma_x(\tau)}.
$$
Hence, the related tangent vector field to $\de\Gamma_t$ becomes
$$
e_j^t=\frac{d}{d\tau}\left(\gamma^t_x(\tau)\right)_{|\tau=0}=e_j+t\nabla_{e_j}\nu.
$$
Now we notice that, for $t$ small enough, the set $\{e_1+t\nabla_{e_1}\nu,\ldots,e_{N-2}+t\nabla_{e_{N-2}}\nu\}$ describes a local basis for the tangent space of $\de\Gamma_t$. For such $t$, the direction of $n^t$ has to be perpendicular to ${\rm{span}}\{\nu,e_1+t\nabla_{e_1}\nu,\ldots,e_{N-2}+t\nabla_{e_{N-2}}\nu\}$ (these are $N-1$ linearly independent vectors in $\R^N$, and thus the direction of $n^t$ is uniquely determined). Since the vectors $e_j$'s were picked in an orthonormal way and since $n^0=n$, we can check directly that this implies that
$$
n^t=\frac{n-t\sum_{j=1}^{N-2}\left\langle \nabla_{e_j}\nu,n\right\rangle e_j + O(t^2)}{\left|n-t\sum_{j=1}^{N-2}\left\langle \nabla_{e_j}\nu,n\right\rangle e_j + O(t^2)\right|}\quad\mbox{ as }t\to 0^+.
$$
A direct computation shows then
\begin{align*}
\left\langle x, \frac{d}{dt}\left(n_{x_t}^t\right)_{|t=0} \right\rangle &= -\left\langle x, \sum_{j=1}^{N-2}\left\langle \nabla_{e_j}\nu,n\right\rangle e_j\right\rangle\\
&=-\sum_{j=1}^{N-2}\left\langle \nabla_{e_j}\nu,n\right\rangle \left\langle x,  e_j\right\rangle =- \left\langle \nabla_{F_1}\nu,n\right\rangle  \\
&=\left\langle \nu,\nabla_{F_1} n\right\rangle = II^{\Sw}(F_1,\nu) = -\left\langle x,\nu\right\rangle II^{\Sw}(\nu,\nu),
\end{align*}
where we used that $|n|=1$, together with \eqref{conedef} and \eqref{duevoltebd}. This establishes \eqref{claim}, and completes this second proof of \eqref{Mink2}.
\end{proof}

The reader can read the right hand side of \eqref{Mink2} as a correction term due to the fact that $\Gamma$ is an hypersurface with relative non-empty boundary. Since $\Gamma$ is hitting a cone and the gluing at the points in $\de\Gamma$ is also orthogonal, such a correction takes a precise geometric form involving the curvatures of $\de\Sw$. First and second Minkowski formulas with remainder terms have appeared also in other contexts in the literature. For example, in \cite{MT} the presence of such remainder terms is related to the underlying complex structure of the setting in which the closed hypersurfaces are embedded. 

\section{Rigidity statement: a corrected result}\label{sec3}

The integral identities established in the previous section enable us to deduce characterizations for constant mean curvature hypersurfaces $\Gamma$ which intersect the cone $\de\Sw$ orthogonally. In case the cone is convex such characterizations are well understood, and they have been established in \cite{RitRos, CP, PT1} through different methods. We are interested in relaxing such convexity assumptions. In particular, as anticipated in the Introduction, in the following theorem we correct a rigidity result for hypersurfaces $\Gamma$ which are also starshaped with respect to the vertex of the cone $O$ which was stated in \cite[Theorem 1.1]{PT2} (as a matter of fact, in that proof we exploited the uncorrect second Minkowski formula from \cite{CP} discussed in the previous section, see \cite[formula (2.4)]{PT2}). We recall that in our notations we say that $\Gamma$ is strictly starshaped if
$$
\left\langle x,\nu_x \right\rangle >0 \qquad\forall x\in \overline{\Gamma}. 
$$

\begin{theorem}\label{prth1}
Suppose that $\omega$ is strictly contained in an half-space of $\R^N$. Let $\Gamma\subset \Sw$ be a smooth $(N-1)$-dimensional manifold which is relatively open, bounded, orientable, connected and with $C^2$-smooth boundary $\de\Gamma$ contained in $\de\Sw\smallsetminus\{O\}$, and assume that $\Gamma$ and $\de\Sw$ intersect orthogonally at the points of $\de \Gamma$. Let us suppose that the mean curvature of $\Gamma$ is a constant $H$. If $\Gamma$ is strictly starshaped with respect to $O$ and if
\begin{equation}\label{ass}
\int_{\de\Gamma}{II^{\Sw}\left(\nu-\left\langle x,\nu\right\rangle \frac{x}{|x|^2},\nu-\left\langle x,\nu\right\rangle \frac{x}{|x|^2}\right)  \left\langle x,\nu\right\rangle }\geq 0,
\end{equation}
then $\Gamma=\Gamma_{\omega, \frac{1}{H}}$, i.e. $\Gamma$ is the (relative to $\Sw$)-boundary of a spherical sector.
\end{theorem}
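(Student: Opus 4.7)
The approach is to combine the two Minkowski identities of Section \ref{sec2} with the pointwise Newton inequality for the principal curvatures of $\Gamma$, and then to exploit the orthogonality condition on $\de\Gamma$ to pin down the center of the resulting sphere. Since $H$ is constant, the first Minkowski identity \eqref{Mink1dopo} becomes $|\Gamma|=H\int_\Gamma\left\langle x,\nu\right\rangle$; the strict starshapedness of $\Gamma$ makes the right-hand integrand positive, and in particular forces $H>0$. Applying Theorem~\ref{thmintro} with $H$ constant, and using the assumption \eqref{ass} to bound the boundary term on the right-hand side of \eqref{Mink2prima} by zero from above, one obtains
\begin{equation*}
H|\Gamma| - \int_\Gamma \sigma_2\left\langle x,\nu\right\rangle \leq 0.
\end{equation*}
Substituting $H|\Gamma|=H^2\int_\Gamma\left\langle x,\nu\right\rangle$ on the left then yields
\begin{equation*}
\int_\Gamma (H^2-\sigma_2)\left\langle x,\nu\right\rangle \leq 0.
\end{equation*}

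Next I invoke the pointwise Newton inequality $H^2\geq \sigma_2$, a direct consequence of Cauchy--Schwarz applied to the principal curvatures $\kappa_1,\ldots,\kappa_{N-1}$ of $\Gamma$ (indeed $(N-1)^2 H^2=(\sum_i\kappa_i)^2\leq (N-1)\sum_i\kappa_i^2$, which via the formula recalled in the Introduction rearranges exactly to $H^2\geq \sigma_2$), with equality if and only if $\Gamma$ is umbilical at the given point. Combined with $\left\langle x,\nu\right\rangle >0$, this forces $H^2\equiv\sigma_2$ on $\Gamma$, so $\Gamma$ is totally umbilical. Being connected, smooth, and of constant positive mean curvature $H$, a classical differential-geometric theorem identifies $\Gamma$ with an open piece of a round sphere of radius $R=1/H$ centered at some point $P\in\R^N$.

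The final step, which I expect to be the main obstacle, is to show $P=O$. For every $x\in\de\Gamma$ the outward normal of $\Gamma$ is $\nu_x=H(x-P)$, so the orthogonality \eqref{ortho} combined with \eqref{conedef} gives
\begin{equation*}
0=\left\langle \nu_x,n_x\right\rangle = H\left\langle x,n_x\right\rangle - H\left\langle P,n_x\right\rangle = -H\left\langle P,n_x\right\rangle,
\end{equation*}
hence $\left\langle P,n_x\right\rangle=0$ for all $x\in\de\Gamma$. Strict starshapedness makes $\Gamma$ a radial graph over $\omega$, so $\de\Gamma$ projects radially onto the whole of $\de\omega$, and the family $\{n_x:x\in\de\Gamma\}$ coincides with the family of outward conormals to $\omega$ inside $\SN$. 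To conclude, one has to use the strict half-space assumption on $\omega$ to show that no nonzero $P\in\R^N$ can be orthogonal to all these conormals -- equivalently, that they span $\R^N$. This assumption is genuinely needed: in the limit case when $\omega$ is exactly a half-sphere the cone is a half-space, the conormal to $\de\omega$ is constant, and $P$ is free to vary in an $(N-1)$-dimensional subspace. Once $P=O$ is established, $\Gamma\subset\de B_{1/H}\cap\Sw$, and connectedness together with $\de\Gamma\subset\de\Sw\setminus\{O\}$ yields $\Gamma=\Gamma_{\omega,1/H}$.
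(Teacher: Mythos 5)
Your derivation of $\int_\Gamma(H^2-\sigma_2)\left\langle x,\nu\right\rangle\leq 0$ from the two Minkowski identities together with assumption \eqref{ass}, and the subsequent use of the Newton/Cauchy--Schwarz inequality $\sigma_2\leq H^2$ combined with $\left\langle x,\nu\right\rangle>0$ to force umbilicality and hence $\Gamma=\de B_{1/H}(P)\cap\Sw$, is exactly the paper's argument. The place where your proof genuinely diverges, and where it is not complete, is the final step $P=O$. The paper disposes of it by citing \cite[Proposition 2.5]{PT2}; you instead sketch a direct argument, correctly reducing matters to $\left\langle P,n_x\right\rangle=0$ for all $x\in\de\Gamma$, but you then leave the decisive claim --- that no nonzero $P$ can be orthogonal to all the cone normals along $\de\omega$ when $\omega$ is strictly contained in an open half-space --- as something ``one has to show''. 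As written this is a gap: it is precisely the point where the half-space hypothesis is used, and (as you yourself note) the claim fails without it, so it cannot be waved through.

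The gap is fillable along the lines you indicate, and here is one way. First, one must check that the radial projection of $\de\Gamma$ covers all of $\de\omega$ (this follows because strict starshapedness together with $\Gamma=\de\Omega\cap\Sw$ makes $\Gamma$ a radial graph over the whole of $\omega$, but it deserves a line). Granting this, $\left\langle P,n_y\right\rangle=0$ holds for every $y\in\de\Sw\smallsetminus\{O\}$ by $0$-homogeneity of $n$, i.e.\ the constant field $P$ is everywhere tangent to the smooth hypersurface $\de\Sw\smallsetminus\{O\}$. Hence for any $y_0\in\de\omega$ off the line $\R P$ the entire line $\{y_0+tP\}_{t\in\R}$ lies in $\de\Sw$; projecting radially to $\SN$ and letting $t\to\pm\infty$ shows that both $P/|P|$ and $-P/|P|$ belong to the compact set $\de\omega$, which is impossible when $\overline\omega\subset\{\left\langle \cdot,e\right\rangle>0\}$. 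Therefore $P=0$... wait, $P=O$ as a point, i.e.\ the center is the vertex, and the conclusion follows as you say. A further small omission: you do not treat $N=2$, where $\sigma_2$ and Theorem \ref{thmintro} are unavailable; the paper points out that in that case the conclusion that $\Gamma$ is an arc of a circle is classical and \eqref{ass} is vacuous, so a sentence to this effect should be added.
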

\begin{proof}
It is obvious from \eqref{Mink1} that the constant $H$ is given by
$$
H=\frac{|\Gamma|}{\int_{\Gamma}\left\langle x,\nu\right\rangle}>0.
$$
Exploiting the fact that $H$ is constant and the two integral identities \eqref{Mink1}-\eqref{Mink2} we have also that
\begin{align*}
\int_{\Gamma} \left(H^2-\sigma_2\right)\left\langle x,\nu\right\rangle &= \int_{\Gamma} \left(H^2\left\langle x,\nu\right\rangle - H\right) + \int_{\Gamma} H-\sigma_2\left\langle x,\nu\right\rangle  \\
&= -H \int_{\Gamma} \left(1-H\left\langle x,\nu\right\rangle \right) + \int_{\Gamma} H-\sigma_2\left\langle x,\nu\right\rangle\\
&=\int_{\Gamma} H-\sigma_2\left\langle x,\nu\right\rangle\\
&=\frac{-1}{(N-1)(N-2)}\int_{\de\Gamma}{II^{\Sw}\left(\nu-\left\langle x,\nu\right\rangle \frac{x}{|x|^2},\nu-\left\langle x,\nu\right\rangle \frac{x}{|x|^2}\right)  \left\langle x,\nu\right\rangle }.
\end{align*}
By \eqref{ass} we have then
$$
\int_{\Gamma} \left(H^2-\sigma_2\right)\left\langle x,\nu\right\rangle \leq 0.
$$
On the other hand, we know that $\left\langle x,\nu\right\rangle>0$ and the arithmetic-geometric inequality ensures the validity of the pointwise inequality $\sigma_2\leq H^2$ through $\Gamma$. Hence
$$
0\leq \int_{\Gamma} \left(H^2-\sigma_2\right)\left\langle x,\nu\right\rangle \leq 0,
$$
and each of the previous inequalities is in fact an equality. In particular $\sigma_2= H^2$ for every point in $\Gamma$, which says that 
$$
II_x^{\Gamma}(\cdot,\cdot)=H \mathbb{I}_{N-1}\qquad\mbox{ for every }x\in\Gamma.
$$
Since $\Gamma$ is smooth and connected it is then a classical fact that such umbilicality property implies that $\Gamma$ is a portion of a sphere, i.e.
$$\exists p_0\in\RN\,\mbox{ such that }\Gamma=\de B_r(p_0)\cap\Sw.$$
We explicitly notice that the previous argument makes sense for every $N\geq 3$. In case $N=2$ the conclusion that $\Gamma$ is a piece of a circle still holds true by a simple and classical argument (in this case $H$ would be the curvature of the smooth curve $\Gamma$). We also stress that the condition \eqref{ass} is trivially satisfied in case $N=2$ (since the cones in $\R^2$ are of course locally flat).\\
In order to conclude the proof of the statement we have to prove that the point $p_0$ coincides with the vertex of the cone $O$. This is true thanks to \cite[Proposition 2.5]{PT2} since in our assumptions $\Sw$ cannot be an half-space.
\end{proof}

If we denote by $P_\omega(E)$ the relative perimeter of $E$ in $\Sw$, the constant mean curvature hypersurfaces $\Gamma$ which attach orthogonally to $\de\Sw$ arise as the relative boundaries of the critical points of $P_\omega(\cdot)$ under a constant costraint for the volume $|E|$. It has been proved in \cite{LP} (see also \cite{RitRos, FI, CRS}) that, in convex cones $\Sw$, the spherical sectors are the only sets contained in $\Sw$ which minimize $P_\omega$ under such a volume constraint. In \cite{BF} the authors established the validity of this isoperimetric property to cones that are almost convex, where \emph{almost} is intended with respect to the $C^{1,1}$-distance from the convex domain on the sphere. The proof in \cite[Theorem 1.2]{BF} is made in two steps: the first one consists of showing that in the almost convex cones the relative boundary of the minimizers are polar graphs (i.e. starshaped), and the second step establishes the radiality of the minimizers having a polar graph as relative boundary via a quantitative version of a Poincar\'e inequality in convex cones. This fact leads us to discuss the case of starshaped CMC hypersurfaces $\Gamma$ as in Theorem \ref{prth1} above. In our previous work \cite{PT2}, in order to provide an alternative approach to the Baer-Figalli result, we made use of the aforementioned \cite[Theorem 1.1]{PT2} in the proof of \cite[Theorem 3.3 and Corollary 3.4]{PT2}: as it is clear from the previous discussion, such approach does not work without assuming \eqref{ass}. In this respect, the assumption \eqref{ass} can be seen as an integral compatibility assumption which aims at relaxing the convexity of the cone (a similar phenomenon for the related overdetermined problem of Serrin type in cones has been noticed in \cite[Remark 3.1 and Proposition 3.2]{PT1}).

\section{Towards an eigenvalue criterion}\label{sec4}

In \cite{RitRos} the authors characterized the spherical sectors among the stable critical points of the relative perimeter functional in convex cones. On the other hand, in \cite{IPW} the stability property of the spherical sectors $\Gamma_{\omega,R}$ have been expressed in terms of the first Neumann eigenvalue of $\omega$. Having this in mind, in this section we establish a connection between smooth stable critical points and  $\lambda_1(\Gamma)$. We have the following

\begin{theorem}\label{lambdino}
Assume that $\Gamma$ is a smooth hypersurface as above, and it is the relative boundary of a stable critical point of $P_\omega$ under a constant volume constraint. Then we have
\begin{equation}\label{rel}
-\int_{\de\Gamma} II^{\Sw}\left(\nu-\left\langle x,\nu\right\rangle \frac{x}{|x|^2},\nu-\left\langle x,\nu\right\rangle \frac{x}{|x|^2}\right) \geq \lambda_1(\Gamma) \int_\Gamma \left| H x-\nu -  \frac{1}{|\Gamma|}\int_{\Gamma}(Hx-\nu)\right|^2,
\end{equation}
where $H$ is the (constant) mean curvature of $\Gamma$.
\end{theorem}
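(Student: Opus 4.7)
The plan is to combine the stability hypothesis on $\Gamma$ (the non-negativity of the second variation of $P_\omega$ under volume-preserving normal deformations) with the Poincar\'e--Neumann characterization of $\lambda_1(\Gamma)$, using as test functions the Cartesian components of the mean-zero vector field
\[
V := Hx-\nu-c,\qquad c := \frac{1}{|\Gamma|}\int_\Gamma(Hx-\nu).
\]
First I would recall that the stability of $\Gamma$ amounts to the inequality
\[
\int_\Gamma\bigl(|\nabla^\Gamma u|^2 - |II^\Gamma|^2 u^2\bigr) \geq \int_{\de\Gamma} II^{\Sw}(\nu,\nu)\,u^2,\qquad \forall\, u\in H^1(\Gamma),\ \textstyle\int_\Gamma u=0
\]
(the standard second variation formula for a CMC hypersurface meeting $\de\Sw$ orthogonally, as in \cite{RitRos}). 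By the flatness identity \eqref{flatx}, one has $II^{\Sw}(\nu,\nu)=II^{\Sw}(\tilde\nu,\tilde\nu)$ along $\de\Gamma$ with $\tilde\nu=\nu-\langle x,\nu\rangle x/|x|^2$, so the boundary integrand in the stability inequality is exactly the one appearing in \eqref{rel}.

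Next I would establish two computational facts. A direct expansion using the constancy of $H$ and the Weingarten relation $\nabla_{e_j}\nu=\sum_i II^\Gamma(e_j,e_i)\,e_i$ yields the pointwise identity
\[
|\nabla^\Gamma V|^2 = |II^\Gamma|^2 - (N-1)H^2 \quad\text{on }\Gamma,
\]
which is non-negative by Cauchy--Schwarz on the principal curvatures (the same inequality gives $|II^\Gamma|^2\geq (N-1)H^2$ pointwise). Moreover, the choice of $c$ ensures $\int_\Gamma V = 0$, so each Cartesian component $V^k = \langle e_k,V\rangle$ is admissible in the stability inequality. Plugging $V^k$ and summing over $k=1,\dots,N$ gives
\[
\int_\Gamma|\nabla^\Gamma V|^2 \geq \int_\Gamma |II^\Gamma|^2 |V|^2 + \int_{\de\Gamma} II^{\Sw}(\tilde\nu,\tilde\nu)\,|V|^2,
\]
while applying the Poincar\'e--Neumann inequality $\int_\Gamma(V^k)^2 \leq \lambda_1(\Gamma)^{-1}\int_\Gamma|\nabla^\Gamma V^k|^2$ componentwise and summing yields
\[
\lambda_1(\Gamma)\int_\Gamma |V|^2 \leq \int_\Gamma|\nabla^\Gamma V|^2.
\]

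The conclusion \eqref{rel} will then follow by combining the three displayed inequalities together with the pointwise bounds. The main obstacle I anticipate lies in this final combination: the stability test produces the boundary integrand weighted by $|V|^2$, whereas \eqref{rel} requires the unweighted integrand. I expect this to be reconciled by careful algebra exploiting the pointwise identity for $|\nabla^\Gamma V|^2$ together with the bound $|II^\Gamma|^2\geq (N-1)H^2$, or alternatively by replacing the test functions $V^k$ with a more refined admissible choice for which the stability inequality directly produces the unweighted boundary term appearing in \eqref{rel}.
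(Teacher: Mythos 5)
Your proposal correctly reproduces one half of the paper's argument: taking the Cartesian components of $V=Hx-\nu-c$ as mean-zero test functions in the variational characterization of $\lambda_1(\Gamma)$ (the paper does the same thing via Reilly's averaging over directions $v\in S^{N-1}$, which is equivalent to your componentwise summation), together with the pointwise identity $\sum_k|\nabla^\Gamma V^k|^2=|II^\Gamma|^2-(N-1)H^2=(N-1)(N-2)(H^2-\sigma_2)$. This correctly yields
\begin{equation*}
(N-1)(N-2)\int_\Gamma (H^2-\sigma_2)\;\geq\;\lambda_1(\Gamma)\int_\Gamma |V|^2 .
\end{equation*}
The genuine gap is in the other half, and it is exactly the obstacle you flag at the end: testing the stability inequality with the components $V^k$ produces the boundary term $\int_{\de\Gamma} II^{\Sw}(\nu,\nu)\,|V|^2$, weighted by $|V|^2$, and there is no ``careful algebra'' that converts it into the unweighted term $\int_{\de\Gamma} II^{\Sw}(\nu,\nu)$ appearing in \eqref{rel}: the cone is not assumed convex, so $II^{\Sw}(\nu,\nu)$ has no sign, and $|V|^2$ is not controlled on $\de\Gamma$. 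As written, your combination of the three displayed inequalities does not close.

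The missing ingredient is the ``more refined admissible choice'' you allude to but do not identify: the paper tests stability with the single scalar function $\bar u(x)=1-H\left\langle x,\nu\right\rangle$, which is admissible (volume-preserving) precisely because of the first Minkowski formula \eqref{Mink1}, and for which Ritor\'e--Rosales computed the second variation exactly:
\begin{equation*}
Q_\Gamma(\bar u,\bar u)=-(N-1)(N-2)\int_\Gamma (H^2-\sigma_2)-\int_{\de\Gamma} II^{\Sw}(\nu,\nu).
\end{equation*}
Stability ($Q_\Gamma(\bar u,\bar u)\geq 0$) then gives $(N-1)(N-2)\int_\Gamma (H^2-\sigma_2)\leq -\int_{\de\Gamma} II^{\Sw}(\nu,\nu)$ with the \emph{unweighted} boundary term, and chaining this with the Poincar\'e estimate above (plus the flatness identity \eqref{flatx} to rewrite $II^{\Sw}(\nu,\nu)$ in the stated form) finishes the proof. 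Without identifying $\bar u$ and this exact evaluation of $Q_\Gamma(\bar u,\bar u)$, your argument does not reach \eqref{rel}.
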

\begin{proof}
Define $F:\Gamma\rightarrow \R^N$ as
$$
F(x)=H x-\nu_x
$$
and
$$
F_0=\frac{1}{|\Gamma|}\int_\Gamma F(x) \in \R^N.
$$
We recall that, from the criticality condition, we have that $H$ is constant throughout $\Gamma$ and that $\Gamma$ meets $\de\Sw$ orthogonally at $\de\Gamma$. We notice that we can restrict to the case $N\geq 3$, since for $N=2$ the desired estimate is trivially satisfied. If $S^{N-1}=\{v\in\R^N\,:\, |v|=1\}$, for any $v\in S^{N-1}$ we denote
$$
u_v(x)=\left\langle F(x)-F_0, v\right\rangle.
$$
Being $F_0$ the average of $F(x)$ on $\Gamma$, we have
$$
\int_{\Gamma} u_v(x) =0.
$$
From the definition of $\lambda_1=\lambda_1(\Gamma)$ we infer
$$
\int_\Gamma |\nabla^\Gamma u_v (x)|^2  \geq \lambda_1 \int_\Gamma u_v^2(x) \qquad\mbox{ for all }v\in S^{N-1}.
$$
We now perform a trick due to Reilly in \cite[Main Lemma]{R} which consists in averaging with respect to $v$ (we denote by $d\sigma(v)$ the standard Hausdorff measure on the sphere $S^{N-1}$). We have
\begin{equation}\label{averagev}
\int_\Gamma\left( \frac{1}{|S^{N-1}|}\int_{S^{N-1}}|\nabla^\Gamma u_v(x)|^2 d\sigma(v)\right) \geq \lambda_1 \int_\Gamma\left( \frac{1}{|S^{N-1}|}\int_{S^{N-1}}u_v^2(x) d\sigma(v)\right).
\end{equation}
We can now compute, for every $x\in \Gamma$,
\begin{align}\label{mediauv}
&\frac{1}{|S^{N-1}|}\int_{S^{N-1}}u_v^2(x) d\sigma(v)=\frac{1}{|S^{N-1}|}\int_{S^{N-1}}\left\langle F(x)-F_0, v\right\rangle^2 d\sigma(v)\\
&=\frac{1}{|S^{N-1}|}\int_{\{|v|<1\}} \div_v\left( \left\langle F(x)-F_0, v\right\rangle (F(x)-F_0) \right)dv= \frac{|F(x)-F_0|^2}{N}.\notag
\end{align}
On the other hand, by considering an orthonormal frame $\{e_1,\ldots,e_{N-1}\}$ for the tangent space of $\Gamma$ locally around any point $x\in \Gamma$, we can write
$$
|\nabla^\Gamma u_v(x)|^2=\left|\sum_{j=1}^{N-1} \left\langle \nabla_{e_j} F(x), v\right\rangle  e_j\right|^2 =\sum_{j=1}^{N-1} \left\langle \nabla_{e_j} F(x), v\right\rangle^2.
$$
Hence, arguing as in \eqref{mediauv}, for every $x\in \Gamma$ we have
\begin{equation}\label{mediagraduv}
\frac{1}{|S^{N-1}|}\int_{S^{N-1}}|\nabla^\Gamma u_v(x)|^2 d\sigma(v)=\sum_{j=1}^{N-1} \frac{|\nabla_{e_j} F(x)|^2}{N}.
\end{equation}
Inserting \eqref{mediauv} and \eqref{mediagraduv} in \eqref{averagev}, we get
$$
\int_\Gamma \sum_{j=1}^{N-1} |\nabla_{e_j} F(x)|^2 d\sigma(x) \geq \lambda_1 \int_\Gamma |F(x)-F_0|^2 d\sigma(x).
$$
Recalling the definition of $F$ and the fact that $H$ is constant, we can compute
\begin{align*}
\sum_{j=1}^{N-1} |\nabla_{e_j} F(x)|^2&=\sum_{j=1}^{N-1}\left|H e_j - \nabla_{e_j} \nu\right|^2= \sum_{j=1}^{N-1}\left|\sum_{i=1}^{N-1} \left(H\delta_{ij}-\left\langle\nabla_{e_j}\nu, e_i\right\rangle\right) e_i\right|^2\\
&=\sum_{i,j=1}^{N-1}\left|H\delta_{ij}-\left\langle\nabla_{e_j}\nu, e_i\right\rangle\right|^2=(N-1)(N-2)\left(H^2-\sigma_2\right).
\end{align*}
This yields
\begin{equation}\label{yuppy}
(N-1)(N-2)\int_\Gamma (H^2-\sigma_2) \geq \lambda_1 \int_\Gamma |F(x)-F_0|^2.
\end{equation}
Comparing \eqref{yuppy} with the desired \eqref{rel}, it is clear that we need to show the following
\begin{equation}\label{claimstab}
(N-1)(N-2)\int_\Gamma (H^2-\sigma_2)+\int_{\de\Gamma} II^{\Sw}\left(\nu-\left\langle x,\nu\right\rangle \frac{x}{|x|^2},\nu-\left\langle x,\nu\right\rangle \frac{x}{|x|^2}\right) \leq 0.
\end{equation}
The validity of \eqref{claimstab} is exactly the ensured by the stability assumption for $\Gamma$. As a matter of fact, in \cite[formula right after (4.11)]{RitRos} it is proved that
$$
Q_\Gamma(\bar{u},\bar{u})= -(N-1)(N-2)\int_\Gamma (H^2-\sigma_2) - \int_{\de\Gamma} II^{\Sw}(\nu,\nu),
$$
where $Q_\Gamma(\bar{u},\bar{u})$ is the second variation of the relative perimeter functional $P_\omega$ under volume constraint computed at the critical point $\Gamma$ along the variation generated by the function $\bar{u}(x)=1-H\left\langle x,\nu\right\rangle$ (we recall that $\bar{u}$ generates an admissible variation which preserves the volume thanks to the first Minkowski formula \eqref{Mink1}). The proof of \eqref{claimstab} is thus complete since $Q_\Gamma(\bar{u},\bar{u})\geq 0$ from the stability assumption and then
\begin{align*}
(N-1)(N-2)\int_\Gamma (H^2-\sigma_2)\leq - \int_{\de\Gamma} II^{\Sw}(\nu,\nu) = -\int_{\de\Gamma} II^{\Sw}\left(\nu-\left\langle x,\nu\right\rangle \frac{x}{|x|^2},\nu-\left\langle x,\nu\right\rangle \frac{x}{|x|^2}\right),
\end{align*}
where we used \eqref{flatx}. This completes the proof of \eqref{claimstab}, and the combination of \eqref{claimstab} and \eqref{yuppy} finishes the proof of \eqref{rel}. 
\end{proof}

We remark that, in the case of convex cones, the left-hand side of \eqref{rel} is non-positive and $\lambda_1(\Gamma)\geq (N-1)$, which immediately implies that the vector $Hx-\nu$ is forced to be constant throughout $\Gamma$ and $\Gamma$ is forced to be a portion of a sphere. In this respect, the previous theorem provides a relaxed convexity assumption under which we have a characterization of (smooth) stable critical points. We can sum up this fact in the following corollary, for which the reader can see the similarities with Theorem \ref{prth1}.

\begin{corollary}
Suppose that $\omega$ is strictly contained in an half-space of $\R^N$. Assume $\Gamma$ is a smooth hypersurface in $\Sw$ as above, and $\Gamma$ is the relative boundary of a stable critical point of $P_\omega$ under a constant volume constraint. Suppose
$$
\int_{\de\Gamma} II^{\Sw}\left(\nu-\left\langle x,\nu\right\rangle \frac{x}{|x|^2},\nu-\left\langle x,\nu\right\rangle \frac{x}{|x|^2}\right)\geq 0.
$$
Then $\Gamma$ is the relative boundary of a spherical sector.
\end{corollary}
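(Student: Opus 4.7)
The plan is to apply Theorem \ref{lambdino} and then extract pointwise information from it. Under the corollary's hypothesis, the left-hand side of \eqref{rel} is non-positive, whereas the right-hand side is manifestly non-negative. Since $\Gamma$ is a smooth, bounded, connected Riemannian manifold with (non-empty) smooth boundary, its first non-trivial Neumann eigenvalue satisfies $\lambda_1(\Gamma)>0$, and both sides of \eqref{rel} are thus forced to vanish. This yields
$$
Hx-\nu_x \equiv F_0 \qquad \text{on } \Gamma,
$$
where $F_0:=\frac{1}{|\Gamma|}\int_\Gamma(Hx-\nu)\in\R^N$ is the constant vector appearing in the proof of Theorem \ref{lambdino}.

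Next, I would exclude the degenerate case $H=0$: the first Minkowski formula \eqref{Mink1dopo} combined with constancy of $H$ gives $|\Gamma|=H\int_\Gamma\left\langle x,\nu\right\rangle$, so $H=0$ would force $|\Gamma|=0$, contradicting the smoothness of $\Gamma$. Up to reversing the orientation (which does not affect the stability condition), we may then assume $H>0$. Setting $p_0:=F_0/H\in\R^N$, the identity $\nu_x=H(x-p_0)$ combined with $|\nu_x|=1$ gives $|x-p_0|=1/H$ for every $x\in\Gamma$. Since $\Gamma$ is connected, relatively open, and contained in $\Sw$, this forces $\Gamma=\de B_{1/H}(p_0)\cap\Sw$.

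The remaining task is to identify the center $p_0$ with the vertex $O$ of the cone. This is exactly where the hypothesis that $\omega$ is strictly contained in a half-space enters, since it guarantees that $\Sw$ itself is not a half-space. One can then invoke \cite[Proposition 2.5]{PT2} verbatim, as at the end of the proof of Theorem \ref{prth1}, to conclude $p_0=O$, so that $\Gamma=\Gamma_{\omega,1/H}$ is a spherical sector. I expect no serious obstacle: the entire argument is essentially a one-line consequence of Theorem \ref{lambdino}, the key conceptual step being the passage from the integral inequality \eqref{rel} to pointwise rigidity via the strict positivity of $\lambda_1(\Gamma)$, while the identification of the center is imported from the proof of Theorem \ref{prth1}.
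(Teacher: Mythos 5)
Your proof is correct and follows essentially the same route the paper intends: the sign hypothesis makes the left-hand side of \eqref{rel} non-positive while the right-hand side is non-negative, and the strict positivity of $\lambda_1(\Gamma)$ forces $Hx-\nu$ to be constant, so $\Gamma$ is a spherical cap whose center is then identified with the vertex via \cite[Proposition 2.5]{PT2} exactly as in Theorem \ref{prth1}. Your added details (ruling out $H=0$ via the first Minkowski formula, and noting that only $\lambda_1(\Gamma)>0$ is needed rather than the bound $\lambda_1(\Gamma)\geq N-1$ quoted in the convex case) are accurate and fill in steps the paper leaves implicit.
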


\subsection*{Acknowledgments}
\noindent We are very grateful to D. Ruiz and P. Sicbaldi to have pointed out that part of the proof of \cite[Proposition 1]{CP} was wrong, and for several useful discussions.

\bibliographystyle{amsplain}

\begin{thebibliography}{10}

\bibitem{BF} E. Baer, A. Figalli, \emph{Characterization of isoperimetric sets inside almost-convex cones}. Discrete Contin. Dyn. Syst. 37, 1-14 (2017)

\bibitem{CRS} X. Cabr\'e, X. Ros-Oton, J. Serra, \emph{Sharp isoperimetric inequalities via the ABP method}. J. Eur. Math. Soc. (JEMS) 18, 2971-2998 (2016)

\bibitem{CP} J. Choe, S.-H. Park, \emph{Capillary surfaces in a convex cone}. Math. Z. 267, 875-886 (2011)

\bibitem{FI} A. Figalli, E. Indrei, \emph{A sharp stability result for the relative isoperimetric inequality inside convex cones}. J. Geom. Anal. 23, 938-969 (2013)

\bibitem{H} C.-C. Hsiung, \emph{Some integral formulas for closed hypersurfaces}. Math. Scand. 2, 286-294 (1954)

\bibitem{IPW} A. Iacopetti, F. Pacella, T. Weth, \emph{Existence of Nonradial Domains for Overdetermined and Isoperimetric Problems in Nonconvex Cones}. Arch. Rational Mech. Anal. 245, 1005-1058 (2022)

\bibitem{LP} P.L. Lions, F. Pacella, \emph{Isoperimetric inequalities for convex cones}. Proc. Amer. Math. Soc. 109, 477-485 (1990)

\bibitem{MT} V. Martino, G. Tralli, \emph{On the Minkowski formula for hypersurfaces in complex space forms}. Int. Math. Res. Not. IMRN no. 2, 1270-1296 (2022)

\bibitem{PT1} F. Pacella, G. Tralli, \emph{Overdetermined problems and constant mean curvature surfaces in cones}. Rev. Mat. Iberoam. 36, 841-867 (2020)

\bibitem{PT2} F. Pacella, G. Tralli, \emph{Isoperimetric cones and minimal solutions of partial overdetermined problems}. Publ. Mat. 65, 61-81 (2021)

\bibitem{R} R.C. Reilly, \emph{On the first eigenvalue of the Laplacian for compact submanifolds of Euclidean space}. Comment. Math. Helvetici 52, 525-533 (1977)

\bibitem{RitRos} M. Ritor\'e, C. Rosales, \emph{Existence and characterization of regions minimizing perimeter under a volume constraint inside Euclidean cones}. Trans. Amer. Math. Soc. 356, 4601-4622 (2004)

\end{thebibliography}

\end{document}